\newtheorem{theorem}{Theorem}[section]
\title{Swap connectivity for two graph spaces between simple and pseudo graphs and disconnectivity for triangle constraints}
\date{}        
\begin{document}
\maketitle

\centerline{\scshape Joel Nishimura}
\medskip
{\footnotesize
% please put the address of the first author
 \centerline{School of Mathematical and Natural Sciences}
   \centerline{ Arizona State University, Glendale, AZ 85306-4908, USA}
} % Do not forget to end the {\footnotesize by the sign }

\begin{abstract}
%% Text of abstract
With sufficient time, double edge-swap Markov chain Monte Carlo (MCMC) methods are able to sample uniformly at random from many different and important graph spaces.  For instance, for a fixed degree sequence, MCMC methods can sample any graph from: simple graphs; multigraphs (which may have multiedges); and pseudographs (which may have multiedges and/or multiple self-loops). In this note we extend these MCMC methods to `multiloop-graphs', which allow multiple self-loops but not multiedges and `loopy-multigraphs' which allow multiedges and single self-loops.  We demonstrate that there are degree sequences on which the standard MCMC methods cannot uniformly sample multiloop-graphs, and exactly characterize which degree sequences can and cannot be so sampled.  In contrast, we prove that such MCMC methods can sample all loopy-multigraphs.  Taken together with recent work on graphs which allow single self-loops but no multiedges, this work completes the study of the connectivity (irreducibility) of double edge-swap Markov chains for all combinations of allowing self-loops, multiple self-loops and/or multiedges.  Looking toward other possible directions to extend edge swap sampling techniques, we produce examples of degree and triangle constraints which have disconnected spaces for  all edges swaps on less than or equal to $8$ edges.     
\end{abstract}

%\begin{keyword}
%
%Edge-swaps \sep Multigraphs \sep Self-loops \sep Triangles \sep Markov chain Monte Carlo \sep Connectivity 
%
%\end{keyword}

\section{Introduction}

Consider graphs $G=(V,E)$, with vertex set $V$ of size $n$ and edges $E$ of the form $(u,v)\in V \times V$.  Let multiloop-graphs be those where multiple self-loops (edges of the form $(u,u)$) are allowed, but all other edges are required to be simple. Similarly, let loopy-multigraphs be those where self-loops are required to be simple, but other edges are allowed to be multiedges\footnote{In the notation of Eggleton and Holton, multiloop-graphs would be of type $(0,1,\infty)$, loopy-multigraphs would be of type $(0,\infty,1)$, while simple graphs and pseudographs are of types $(0,1,1)$ and $(0,\infty,\infty)$ respectively.}.  For a vertex $u$, we denote the multiset of adjacent vertices, or neighbors, as $N(u)$, and the degree sequence as $k_u = |N(u)|$. To simplify our analysis, we will assume that $\{k_u \}$ has no nodes with $0$ degree. 

Frequently referred to as a `configuration model', the ability to sample graphs with a fixed degree sequence uniformly at random underlies a number of hypothesis tests in network science \cite{MRCconfiguration}.  The most well known such configuration model uses the straightforward and simple `stub-matching' procedure to sample pseudographs.  However, when sampling other graphs spaces, which may or may not have multiedges, self-loops or multiple self-loops, it can be difficult to sample graphs uniformly.  Indeed, in many graphs spaces there are not ways to directly sample degree constrained graphs, and instead Markov chain Monte Carlo techniques are used, the most common of which is based on double edge-swaps.  

Double edge-swaps date back to Petersen in 1891 \cite{petersen1891theorie}, and are a well established technique of transforming one graph into a closely related graph with the same degree sequence.  Let $(u,v),(x,y)\leadsto (u,x),(v,y)$ denote the double edge swap that replaces edges $(u,v)$ and $(x,y)$ with $(u,x)$ and $(v,y)$.  Notice that such a double edge swap creates a new graph with the same degree sequence, but may or may not create/destroy self loops and/or multiedges. For a degree sequence $\{ k_u\}$ consider the graph of graphs (gog) $\mathcal{G}(\{k_u\})=(\mathcal{V},\mathcal{E})$ where each element $G\in\mathcal{V}$ is a graph, and edges $(G_i, G_j)\in\mathcal{E}$ correspond to a double edge swap that transforms $G_i$ to $G_j$.  

Random walks on a gog are thus random walks across possible degree constrained graph structures, and if the appropriate transition probabilities are used \cite{MRCconfiguration,carstens2016switching}, then a gog forms the backbone of a MCMC sampler.  However, such an MCMC sampler (along with a family of related MCMC methods \cite{carstens2015Proof,carstens2016curveball}) can only sample from all possible graphs if the underlying gog is connected.  Indeed, previous work has shown gog connectivity for the graph of simple graphs \cite{zhang2010traversability, bienstock1994degree, berge1962theory, eggleton1981simple, taylor1981constrained}, the graph of pseudographs  \cite{eggleton1979graph}, and the graph of multigraphs \cite{hakimi1963realizability}.  However, the graph of loopy-graphs is not connected for all degree sequences, though there are techniques for determining exactly which degree sequences are disconnected \cite{nishimura2017uniformly}. In this paper we examine the last two natural graph spaces between simple and pseudographs: multiloop-graphs, which allow multiple self-loops but not multiedges; and loopy-multigraphs, which allow multiedges and single self-loops.  Let $\mathcal{G}_{ll}$ and $\mathcal{G}_{lm}$ be subgraphs of $\mathcal{G}$ restricted to be those where the elements of $\mathcal{V}$ are multiloop-graphs and loopy-multigraphs respectively.

We find that $\mathcal{G}_{lm}$ are connected for all degree sequences, implying that standard MCMC sampling techniques can sample from this space.  In contrast, we show that $\mathcal{G}_{ll}$ are disconnected for some degree sequences, and develop the exact criterion to determine for which degree sequences this is so. This work fills in the remaining potential graph spaces which may or may not allow multiedges, single self-loops, or multiple self-loops, as shown in table \ref{tableOne}.  

\begin{table}\begin{centering}
\resizebox{\textwidth}{!}{
\begin{tabular}{ l | c c c c c c }
  & pseudo- & \bf{\emph{ loopy-}} &\bf{\emph{  multiloop-}} & loopy-& multigraph & simple \\
 &graph&\bf{\emph{ multigraph}}&\bf{\emph{graph}}&graph &&\\
  \hline			
  self-loops: & m & \bf{\emph{ s}} & \bf{\emph{ m}} & s&0 &0 \\
  edges: & m & \bf{\emph{ m}} & \bf{\emph{ s}}& s& m&s \\
  \hline
  gog connected & y & \bf{\emph{ y}} & \bf{\emph{ n}} & n& y& y\\
  \hline  
\end{tabular}}
\caption{Between pseudographs and simple graphs lie a range of possible graphs spaces depending on whether multiple self-loops are allowed (m), only single self-loops are (s) or no self-loops are (0) and whether edges are allowed to be multiedges (m) or not (s).  Some spaces have a connected graph of graphs for all degree sequences while others do not.  This paper establishes the emphasized columns. \label{tableOne} }
\end{centering}
\end{table}

One benefit of MCMC approaches is that they can be easily extended to new and varied constraints.  For instance, researchers have explored using triple edge-swaps and $k$ edge-swaps  to sample from graphs with constrained degree sequences and a fixed number of triangles \cite{tabourier2011generating}. Indeed, conserving the number of triangles would provide a useful null model for social network researchers interested in understanding clustering. However, we present examples of degree sequences and triangle sequences such that no edge-swaps on less than $8$ edges can connect the space.  Indeed, though small, these examples involve graphs relatively distant from each, and thus may raise concerns even for methods which do not strictly enforce triangle constraints, but instead attempt to bias swaps towards a triangle based constraint \cite{colomer2014double}.

\section{Loopy-Multigraphs}

To see that $\mathcal{G}_{lm}$ is connected, one need only lean heavily on the already established fact that for all degree sequences the graph of multigraphs is connected \cite{hakimi1963realizability}.  

\begin{theorem}
For all degree sequences $\{k_u\}$, $\mathcal{G}_{lm}(\{k_u\})$ is connected.
\end{theorem}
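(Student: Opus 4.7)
The approach is to reduce the problem to Hakimi's result on connectivity of the graph of multigraphs. I would first show that every loopy-multigraph can be transformed, via swaps in $\mathcal{G}_{lm}$, into a multigraph (one with no self-loops) having the same degree sequence. Once $G_1, G_2$ are both reduced to multigraphs $M_1, M_2$ with the common degree sequence $\{k_u\}$, Hakimi's theorem yields a swap sequence from $M_1$ to $M_2$ in the graph of multigraphs; since such swaps cannot create self-loops (otherwise they would leave the multigraph space), they also lie in $\mathcal{G}_{lm}$. Concatenating the reduction $G_1 \leadsto M_1$, the Hakimi path $M_1 \leadsto M_2$, and the reverse of the reduction $G_2 \leadsto M_2$ yields the desired path in $\mathcal{G}_{lm}$.

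The self-loop reduction uses two moves. If $G$ has self-loops at two distinct vertices $u, v$, the swap $(u,u),(v,v) \leadsto (u,v),(u,v)$ removes both self-loops simultaneously while adding only a double edge, which is valid in $\mathcal{G}_{lm}$. Iterating this, we may assume $G$ has at most one self-loop. If exactly one self-loop remains, at some vertex $u$, I would pick any edge $(x,y)$ of $G$ with $x, y \neq u$ and apply $(u,u),(x,y) \leadsto (u,x),(u,y)$; because $x, y \neq u$, no new self-loop is created, so the result is again in $\mathcal{G}_{lm}$.

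The main obstacle is the edge case where this final swap cannot be performed because every edge of $G$ is incident to $u$. I would argue that this case is vacuous by a uniqueness argument. A single self-loop at $u$ together with all edges incident to $u$ forces $\sum_{v \neq u} k_v = k_u - 2$; any loopy-multigraph with this degree sequence must have a self-loop at $u$ (to absorb $u$'s extra degree, since otherwise $k_u > \sum_{v \neq u} k_v$), no self-loops elsewhere, and no edges off $u$, which forces the multiplicity of $(u, v)$ to equal $k_v$ and uniquely determines $G$. Because the degree sequence is preserved under swaps, this stuck configuration can arise only when $G_1 = G_2$ is the unique loopy-multigraph realizing $\{k_u\}$, in which case there is nothing to connect; and for the same reason no intermediate state of the reduction can become stuck unless $G_1$ was. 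Hence the reduction always terminates at a multigraph, and the plan goes through.
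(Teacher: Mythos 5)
Your proof is correct and follows essentially the same strategy as the paper's: pair off self-loops at distinct vertices via $(u,u),(v,v)\leadsto(u,v),(u,v)$, eliminate any remaining single self-loop by swapping it with an edge not incident to the looped vertex, and invoke Hakimi's connectivity of the multigraph space. The only divergence is in the stuck case where every edge is incident to the looped vertex $u$: the paper notes that if the \emph{other} graph lacks a self-loop at $u$ then the degree sum off $u$ forces an edge $(v,w)$ with $v,w\neq u$ to exist (and when both graphs retain a self-loop at the same vertex it simply keeps them matched), whereas you show the stuck graph is the unique realization of its degree sequence so that there is nothing to connect --- both arguments are sound.
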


\begin{proof}

We will show that there exists a path between any two loopy-multigraphs $G_i,G_j\in \mathcal{V}$.  To construct this path, we will first show that $G_i$ and $G_j$ are connected to graphs $G_i^*$ and $G_j^*$, which either both have only a single self loop at the same node, or don't have any self-loops at all.  The connectivity of graphs of multigraphs \cite{hakimi1963realizability} then guarantees that $G_i^*$ is connected to $G_j^*$.  

For any loopy-multigraph with two self-loops $(u,u)$ and $(v,v)$ double edge swap  $(u,u),(v,v)\leadsto(u,v)(u,v)$ produces a new loopy-multigraph with two less self-loops.  Thus, both $G_i$ and $G_j$ are connected to graphs $G_i'$ and $G_j'$ which contain at most a single self-loop.  Now consider the following three cases:
\begin{enumerate}
\item $G_i'$ and $G_j'$ have no self-loops: thus, $G_i^*=G_i'$ and $G_j^* = G_j'$.
\item Both $G_i'$ and $G_j'$ have a single self-loop at the same vertex $u$: thus, $G_i^*=G_i'$ and $G_j^* = G_j'$.
\item WLOG $G_i'$ has a self-loop at vertex $u$ but $G_j'$ does not. We will produce a new graph connected to $G_i'$ but without a self-loop at $u$. This argument, or potentially two applications of this argument, lead to the first case. 

If $(u,u)\in V_i'$ but $(u,u)\not \in V_j'$ then the sum of the degrees in the subgraph of $G_i'$ on vertices $V\setminus \{u\}$ is two greater than that of the subgraph of $V_j'$ on the same vertices.  Thus, the sum of degrees in the subgraph of $G_i'$ on vertices $V\setminus  \{u\}$ is at least $2$, and there must exist edge $(v,w)\in E_i'$ with $u\not = v,w$.  Swap $(u,u),(v,w)\leadsto(u,v),(u,w)$ produces a new loopy-multigraph without any self-loops.  
\end{enumerate} 
As mentioned, connectivity from $G_i^*$ to $G_j^*$ thus follows from the connectivity of graphs of multigraphs \cite{hakimi1963realizability}

\end{proof}

\section{Multiloop-graphs}

Here we determine the exact criterion for which degree sequences $\mathcal{G}_{ll}(\{k_u\})$ is connected. 

\begin{theorem}
For  $\mathcal{G}_{lm}(\{k_u\})$ with $|\mathcal{V}| \ge 2$ then $\mathcal{G}_{lm}(\{k_u\})$ is connected if and only if there exists both: 
\begin{enumerate}
\item a vertex $u$ such that $k_u$ is odd;
\item and a vertex $v$ such that $k_v-(n-1)$ is negative or odd\footnote{Recall the assumption that $\{k_u\}$ has no vertices with degree $0$.}.
\end{enumerate}
\end{theorem}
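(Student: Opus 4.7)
The plan is to prove both directions by pinpointing two extremal multiloop-graphs whose membership in $\mathcal{V}$ is precisely the obstruction to connectivity. Define the \emph{all-loop graph} $G^-$ to consist of $k_v/2$ self-loops at each vertex $v$ and no non-loop edges; it is a valid multiloop-graph exactly when every $k_v$ is even, i.e.\ exactly when condition~1 fails. Define the \emph{complete-plus-loop graph} $G^+$ to consist of every possible non-loop edge $(u,v)$ together with $(k_v-(n-1))/2$ loops at each $v$; it is a valid multiloop-graph exactly when every $k_v\ge n-1$ with $k_v-(n-1)$ even, i.e.\ exactly when condition~2 fails. Thus the two conditions of the theorem together are equivalent to $G^-,G^+\notin\mathcal{V}$.

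For necessity I would show both $G^-$ and $G^+$ are isolated vertices of $\mathcal{G}_{ll}$. In $G^-$ every edge is a loop, so the only non-trivial candidate swap is $(u,u),(v,v)\leadsto(u,v),(u,v)$, which creates a multiedge forbidden in multiloop-graphs. In $G^+$ every pair of distinct vertices is already joined by a non-loop edge, so every candidate swap either produces a repeated non-loop edge or collapses two loops into the forbidden multiedge $(u,v),(u,v)$. Combined with $|\mathcal{V}|\ge 2$, such an isolated vertex forces disconnectivity.

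For sufficiency, assume both conditions hold, so that $G^-,G^+\notin\mathcal{V}$. I would show every $G\in\mathcal{V}$ is swap-connected to a canonical minimum-total-loop representative, and then invoke the connectivity of simple-graph gogs \cite{zhang2010traversability} on the non-loop part to connect any two representatives sharing a loop sequence. The main workhorse is a loop-rearrangement lemma: the direct swap $(u,u),(x,y)\leadsto(u,x),(u,y)$ strictly decreases the total loop count whenever some non-loop edge $(x,y)$ sits inside $B(u):=V\setminus(\{u\}\cup N_s(u))$; and a loop can be shifted from $u$ to $v$ (keeping the total unchanged) by the two-step sequence $(v,p),(v,q)\leadsto(v,v),(p,q)$ followed by $(u,u),(p,q)\leadsto(u,p),(u,q)$, provided $v$ has simple neighbors $p,q\in B(u)$ with $(p,q)\notin E$. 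Alternating reductions and shifts drives the loop sequence down to its minimum.

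The principal technical obstacle is showing the process never gets stuck while some $\ell_u$ exceeds its per-vertex minimum $\max(0,\lceil(k_u-(n-1))/2\rceil)$: one must always locate either a usable edge inside some $B(\cdot)$ or a suitable triple $(v,p,q)$ enabling a loop shift. The hardest subcase is when $B(u)$ is independent in the non-loop subgraph; here I would exploit condition~2 (equivalently, $G\ne G^+$) to find a missing non-loop edge somewhere in $G$, then use a preparatory swap $(v,a),(w,b)\leadsto(v,w),(a,b)$ with $v,w\in B(u)$, $a,b\in N_s(u)$, and $(a,b)\notin E$---or $(v,a),(w,a)\leadsto(v,w),(a,a)$ when only a common neighbor $a$ is available---to manufacture an edge inside $B(u)$. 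Condition~1 (equivalently, $G\ne G^-$) supplies the non-loop edges needed for the preparatory swap. The resulting case analysis parallels the disconnectivity analysis for loopy-graphs carried out in \cite{nishimura2017uniformly}.
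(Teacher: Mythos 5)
Your necessity argument is exactly the paper's: the all-loop graph and the complete-plus-loops graph are isolated vertices of the gog precisely when conditions 1 and 2 fail. The sufficiency half, however, runs in the opposite direction from the paper and contains a genuine gap: the canonical target you propose does not exist in general. Consider $\{k_u\}=\{3,3,1,1\}$ on vertices $a,b,c,d$. Both conditions of the theorem hold ($k_c-(n-1)=1-3=-2<0$, and all degrees are odd), so your per-vertex minima $\max(0,\lceil(k_u-(n-1))/2\rceil)$ are all $0$; but the sequence violates the Erd\H{o}s--Gallai inequality at $k=2$ ($3+3=6>2+1+1$), so no loop-free realization exists. Every multiloop realization with one loop has it at $a$ (edges $ab,bc,bd$) or at $b$ (edges $ab,ac,ad$), and in either one no loop-reducing swap $(u,u),(x,y)\leadsto(u,x),(u,y)$ is available because every non-loop edge meets the closed neighborhood of the looped vertex. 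Thus the process necessarily stalls with some $\ell_u$ strictly above your claimed minimum: the ``principal technical obstacle'' you flag is not a difficulty to be overcome but a false claim. Repairing this route would require identifying the true minimum loop count (a global, Erd\H{o}s--Gallai-type quantity rather than a per-vertex formula), characterizing all minimizers --- which can have different loop sequences and different non-loop structures --- and proving that whole set is mutually reachable; your plan only addresses representatives \emph{sharing} a loop sequence, and the preconditions of your shift lemma (simple neighbors $p,q\in B(u)$ with $(p,q)\notin E$) are not shown to be satisfiable when needed.

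The paper sidesteps all of this by extremizing in the other direction: it takes $G_l$ to be a reachable graph with the \emph{maximum} number of loops, observes that maximality forbids open wedges (any wedge can be swapped into a loop), so the non-loop part of $G_l$ is a disjoint union of cliques, and then uses condition 2 to kill cliques of size $\ge 4$ and condition 1 to kill triangles. The resulting canonical form --- $\lfloor k_u/2\rfloor$ loops at every vertex plus a perfect matching on the odd-degree vertices --- always exists and is unique up to the choice of matching, so simple-graph gog connectivity on degree sequence $\{k_i \bmod 2\}$ finishes the proof. In your $\{3,3,1,1\}$ example the two one-loop graphs are joined only by first \emph{increasing} to the two-loop graph, which is exactly why the maximum-loop canonical form is the right meeting point. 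If you want to salvage your write-up, keep your necessity paragraph and your observation that wedge swaps create loops, but invert the sufficiency argument to climb to the maximum-loop graph rather than descend.
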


\begin{proof}
These conditions are necessary for the following two reasons.  First, if there is no vertex with an odd degree, then the graph composed of $\frac{k_w}{2}$ self-loops at  each vertex is a valid graph---but swapping two self-loops on different vertices would create a multiedge, and is thus not a valid swap in $G_{ll}$.  Meanwhile, a degree sequence that lacks the second property, can wire the complete graph with possibly some number of self-loops--but any double edge swap on such a graph would also make a multiedge.  

To demonstrate that these conditions are sufficient we will show that any two graphs $G_s$ and $G_t$ in  $\mathcal{G}_{ll}$ are each connected to a graph with $\lfloor \frac{k_u}{2}\rfloor$ self-loops at every vertex---all other edges necessarily form a simple graph with degree distribution $\{k_i \pmod 2\}$. Connectivity on simple graphs will then give that there is a path from $G_s$ to $G_t$. Thus, it only remains to be shown that any graph is connected to a graph with $\lfloor \frac{k_u}{2}\rfloor$ self-loops at each vertex. 

Of the graphs connected to a graph $G$ let $G_l$ be the graph with the maximum number of self-loops.  Since rewiring an open wedge $(u,v)$, $(u,x)$ creates a self-loop, $G_l$ cannot contain any open wedges. Thus every subset of three vertices in $G_l$ contains either an isolated edge, or a triangle.  Let $K$, represent the largest complete graph in $G_l$. Since there exists $v$ where $k_v-(n-1)$ is negative or odd then there is at least one vertex $u$ with positive degree not in $K$. Since $K$ is maximal, at least some edge between $K$ and $u$ is absent, and since $G_l$ contains no wedges, $u$ must have no neighbors in $K$.  Since $u$ has positive degree there exists edge $(u,w)$ (possibly with $u=w$). By a similar argument as above, $w$ is also independent of $K$.  

Let $c = |K|$. Suppose to the contrary that $c\ge4$, with vertices $x$, $y$, $a$ and $b$ in $K$.  Notice that because edges $(u,x)$ and $(w,y)$ are absent the double edge swap on $(u,w)$, $(x,y)$ is valid, but doing so creates wedges $xay$ and $bxu$ which can be used to create two self-loops, contradicting that $G_l$ is maximal. Thus $G_l$ must be composed of triangles, isolated edges and self-loops. 

Since there exists at least one vertex $p$ with odd degree, there must be an isolated edge $(p,q)$.  However, if $G_l$ contains a triangle on vertices $x$, $y$ and $z$, then valid rewire $(p,q)$ and $(x,y)$ creates open wedges $pxz$ and $qyz$, and similarly contradicts the maximality of $G_l$. Thus $G_l$ is composed of only isolated edges and self-loops, and so must have $\lfloor \frac{k_i}{2}\rfloor$ self-loops at every vertex and with all other edges comprising a simple graph with degree distribution $\{k_i \pmod 2\}$.  
\end{proof}

While it is thus the case that $\mathcal{G}_{ll}$ is not connected for all degree sequences, it is worth noting that the two properties required for it to be connected are likely true of almost all empirical degree sequences investigated.  Indeed, all sparse networks with a  single leaf would have a connected $\mathcal{G}_{ll}$.  Next, we consider an extension of MCMC techniques to a space where edge-swaps may have significant limitations. 

\section{Additional constraints}

\begin{figure}\begin{centering}
%\begin{tabular}{cc} 
 \parbox[c]{.5\linewidth}{\includegraphics[width=1\linewidth]{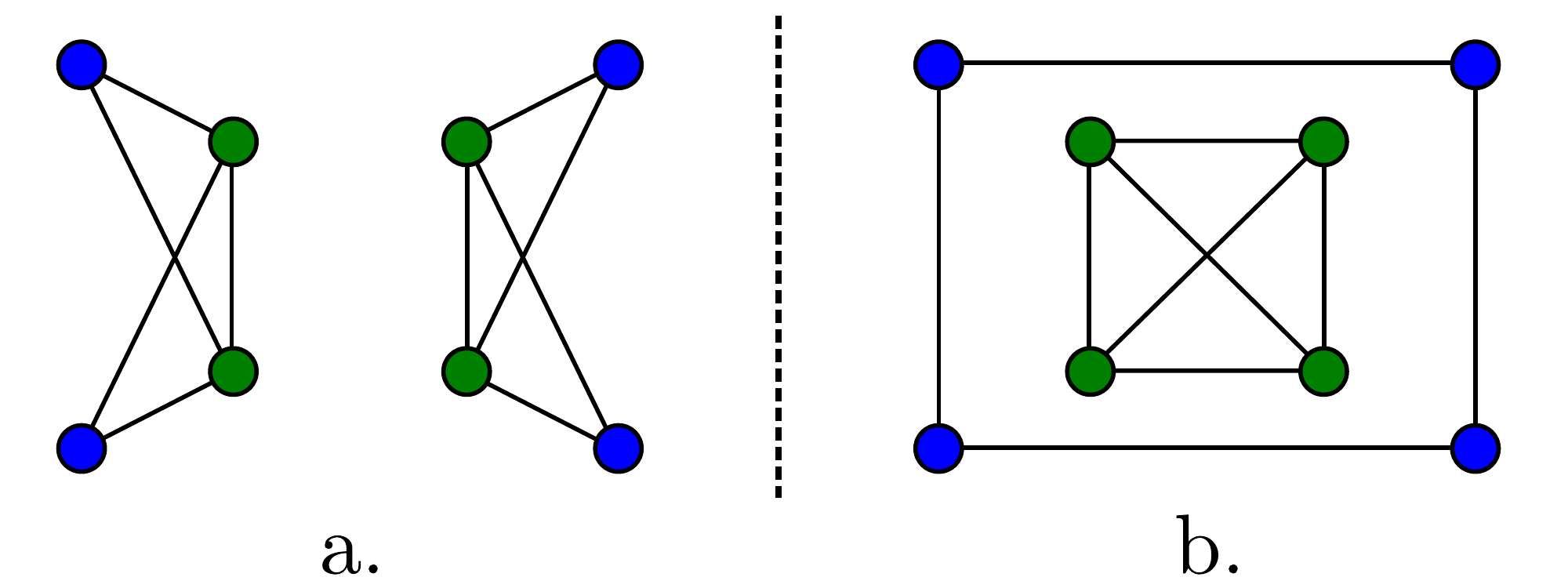}  } %&
\begin{tabular}{|c |c|} 
\hline
number of  & number of graphs \\
 		triangles & with given deg. seq. \\ \hline
0 & 2052 \\
1 & 2664 \\
2 & 1152 \\
3 & 168 \\
4 & 21\\ \hline
\end{tabular}
%\end{tabular}
\caption{Of all possible configurations with exactly 4 triangles, the degree sequence $\{3,3,3,3,2,2,2,2 \}$ has exactly two isomorphism classes. Taken together, the $18$ graphs in isomorphism class (a) and the $3$ graphs in isomorphism class (b) account for all $21$ graphs with $4$ triangles enumerated in an exhaustive search of the degree sequence (right).  \label{triImp}}
\end{centering}
\end{figure}

\begin{figure}\begin{centering}
 \parbox[c]{.5\linewidth}{\includegraphics[width=1\linewidth]{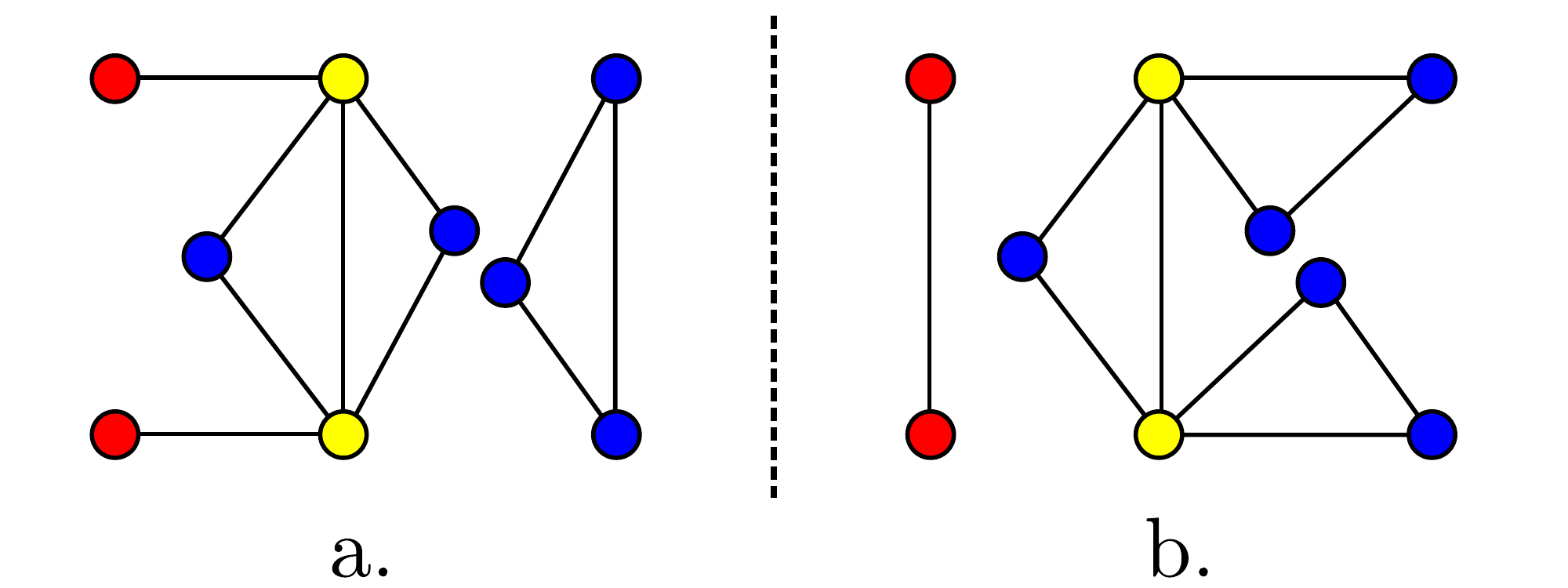} }
 \begin{tabular}{|c |c|}
 \hline
graphs w/ deg. seq. $\{k_i\}$& 5075 \\
subset w/ triangles $\{t_i\}$ & 50 \\[2mm]
size of class (a) & $  20 $ \\[3mm]
size of class (b) & $ 30$ \\[2mm]
 \hline
\end{tabular}
\caption{Of all possible simple graphs with the degree sequence $\{d_i\} = \{4,4,2,2,2,2,2,1,1\}$ and corresponding triangle sequence $\{t_i\} = \{2,2,1,1,1,1,1,0,0 \}$  there are exactly two isomorphism classes. \label{triImpSeq}}
\end{centering}
\end{figure}

\begin{figure}\begin{centering}
\includegraphics[width=1\linewidth]{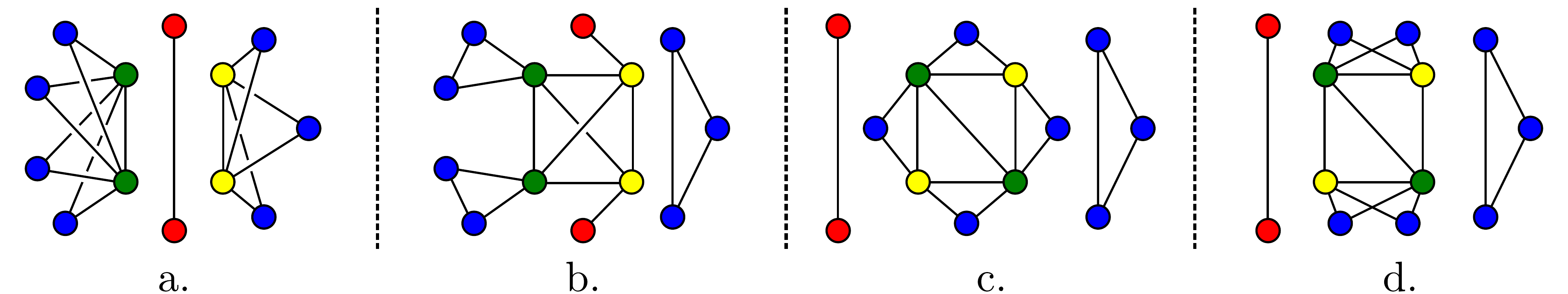}
\caption{There are $302,914,500$ graphs with degree sequence $\{5,5,4,4,2,2,2,2,2,2,2,1,1\}$ and of these $1,715$ have triangle sequence $\{4,4,3,3,1,1,1,1,1,1,1,0,0\}$ and these belong to four isomorphism classes of sizes $35$, $420$, $840$ and $420$. \label{triImpSeq2}}
\end{centering}
\end{figure}

In a number of situations it is natural to consider sampling the  subset of simple graphs with the additional constraint that the number of triangles in the graph remains constant. An even more restrictive constraint may requires that graphs preserve a triangle sequence $\{t_u\}$, where each vertex $u$ is a member of exactly $t_u$ triangles.   Notice that if the degree sequence is fixed, these two constraints are exactly those that preserve the clustering coefficient and the local clustering coefficient respectively.  

Previous studies have examined the ability of $k$ edge-swaps (a generalization of double edge swaps) to sample from graphs with a fixed number of triangles \cite{tabourier2011generating}.  A $k$ edge-swap involves selecting $k$ edges and cyclically permuting one endpoint in each edge. For example, a $3$ edge-swap on $(u,v)$, $(x,y)$ and $(w,z)$ might create edges $(u,y)$, $(x,z)$ and $(w,v)$.  However, while there is numerical evidence that for some degree sequences $3$ edge-swaps and $4$ edge-swaps are effective at sampling simple graphs with a constrained number of triangles, we show that there are degree sequences for which these spaces are not connected.

Consider, as in figure \ref{triImp}, degree sequence $\{3,3,3,3,2,2,2,2 \}$ with $4$ total triangles. As checked exhaustively, this degree sequence has exactly $2$ isomorphism classes with $4$ triangles and thus it then immediately follows that the associated graph of graphs is disconnected for $k$ edge-swaps for $k<8$ since there are $8$ different edges between the two isomorphism classes.  

To see that the most common family of swaps cannot also preserve triangle sequences, consider, figures \ref{triImpSeq} and \ref{triImpSeq2}, which display the the only simple graph isomorphism classes with the given degree and triangle sequences. Again, notice, that since the second such isomorphism class differs from all other isomorphism classes by at least $8$ edges and thus are disconnected under the most commonly used edge swaps.

The variety between the $3$ discussed counter examples suggests that the problems of constraining triangle counts and triangle sequencing may be more general than the previously mentioned problems associated with including self-loops. Indeed, relative to the size of these graphs, the various isomorphism classes discussed differ by many edges, and thus likely also pose a problem to methods which might subsample graphs which meet triangle constraints from a gog without triangle constraints, even with biased transition weights.

\section{Conclusion}

While it may have been hoped that double edge-swap MCMC techniques would be able to sample all graphs for all degree sequences from any of the four graph space between simple graphs and pseudographs, this is not the case.  Instead, when self-loops are permitted, but multiedges are not, there are families of degree sequences for which double edge-swap MCMC sampling will not sample every graph.  However, it is likely the case that the problematic degree sequences are extremely rare in empirical settings. In any case, testing for whether the degree sequence is problematic is not too computationally expensive.  

In contrast, the negative examples of MCMC sampling on simple graphs with a constrained degree sequence and triangle count, as well as the examples for a constrained degree sequence and triangle sequence are more worrying from a practical standpoint. Indeed, whether the demonstrated problems with $k$ edge-swaps pose a fundamental limitation not only to sampling with strict constraints, but also to importance weighted or biased samplers is worth considering.  

\section{Acknowledgments}

This work was aided by Johan Ugander's suggestion of investigating both triangle and triangle sequence constraints, and spurred by conversations with Bailey Fosdick, Dan Larremore and Johan Ugander.

 \bibliographystyle{plain} 
 \bibliography{referencesJN3}

\end{document}